\numberwithin{equation}{section}
\newtheorem{theorem}{Theorem}[section]
\newtheorem{lemma}{Lemma}[section]
\newtheorem{definition}{Definition}[section]
\newtheorem{remark}{Remark}[section]
\makeatletter \@addtoreset{figure}{section} \makeatother
\begin{document}
\title[\ Large time  behavior  of  solution  to nonlinear  Dirac  equation]
      {Large time  behavior  of  solution  to nonlinear  Dirac  equation  in $1+1$ dimensions}
%% Place all authors' names in [ ] shown as running head;
 %% No more than 40 letters. Leave { } empty
 \author{Yongqian Zhang}
\address[Y. Zhang]{\small School of Mathematical Sciences,
                                   Fudan University, Shanghai 200433, P. R. China}
\email{\tt yongqianz@fudan.edu.cn}
\author{Qin Zhao}
\address[Q. Zhao]{\small School of Mathematical Sciences,
Shanghai Jiao Tong University, Shanghai 200240, P. R. China}
\email{\tt zhao@sjtu.edu.cn}
 %% It is required to enter MSC and Keywords.
%\subjclass{Primary: 35Q41 ; Secondary: 35L60}
\keywords{Large time behavior, Nonlinear Dirac equation, Gross-Neveu model,  Global strong solution, Travelling wave solution. }
\subjclass[2010]{35Q41, 35Q40, 35L60.}

\begin{abstract}
This paper studies the large time behavior of solution for a class of nonlinear massless Dirac equations in $R^{1+1}$.  It is shown that the solution will tend to travelling wave solution when time tends to infinity.
\end{abstract}
\maketitle

\section{Introduction}
We are concerned with the large time behavior of solution for the nonlinear Dirac equations
\begin{equation}\label{eq-dirac}
\left\{ \begin{array}{l} i(u_t+u_x)=N_1(u,v), \\i( v_t-v_x)=N_2(u,v),
\end{array}
\right.
\end{equation}
with initial data
\begin{equation}\label{eq-dirac-initialv}
(u, v)|_{t=0}=(u_0(x), v_0(x)),
\end{equation}
where $(t,x)\in R^2$, $(u,v)\in \mathbf{C}^2$. The nonlinear terms  take the following form
\begin{equation}\label{eq-nonlinearstruc}
N_1=\partial_{\overline{u}}W(u,v), \quad N_2=\partial_{\overline{v}}W(u,v)
\end{equation}
with
\[
 W(u,v)=\alpha |u|^2|v|^2+\beta (\overline{u}v+u\overline{v})^2,
 \]
where $\alpha,\beta\in R^1$ and $\overline{u}, \overline{v}$ are complex conjugate of $u$ and $v$. (\ref{eq-dirac}) is called  Thirring equation for $\alpha=1$ and $\beta=0$, while it is called Gross-Neveu equation for $\alpha=0$ and $\beta=1/4$; see for instance \cite{thirring,gross-neveu,pelinovsky}.

There are many works devoted to study the Cauchy problem  for nonlinear  Dirac equations, see for example  \cite{bournaveas-zouraris,cacciafesta,candy,contreras,deldado,escobedo,huh,huh2,pelinovsky,sasaki,selberg,zhang} and the references therein. The survey of  well-posedness and stability results for nonlinear Dirac equations in one dimension is given in \cite{pelinovsky}. Recently, the global existence of solutions in $L^2$ for Thirring model in $R^{1+1}$ has been established by Candy  in \cite{candy}, while the well-posedness for solutions with low regularity for Gross-Neveu model  in $R^{1+1}$ has been obtained in Huh and Moon \cite{huh-moon}, and in Zhang and Zhao \cite{zhang-zhao}.

Our aim is to establish the large time behaviour of solution to  (\ref{eq-dirac}) and (\ref{eq-dirac-initialv}). Similar to \cite{Beale}, it is shown that the solution will tend to travelling wave solution when time tends to infinity. The main results are stated as follows.

\begin{theorem}\label{thm-asm}
For any solution $(u,v)\in C([0,\infty),H^s(R^1))$ with $s>\frac{1}{2}$, there hold that,
\begin{equation}\label{eq-thm-asm1} \lim\limits_{t\to\infty} \int_{-\infty}^{\infty} |u(x,t)-u_0(x-t)-G_1(x-t)|^2 dx =0,\end{equation}
and
\begin{equation}\label{eq-thm-asm2} \lim\limits_{t\to\infty} \int_{-\infty}^{\infty} |v(x,t)-v_0(x+t)-G_2(x+t)|^2 dx =0,\end{equation}
where
\[
G_1(x-t)=- i \int_{0}^{\infty} N_1\big( u(x-t+\tau, \tau), v(x-t+\tau,\tau)\big)d\tau,
\]
 and
\[
G_2(x+t)=- i \int_{0}^{\infty} N_2\big( u(x+t-\tau, \tau), v(x+t-\tau,\tau)\big)d\tau.
\]
Moreover, we have
\begin{equation}\label{eq-thm-asm3}
\lim\limits_{t\to\infty} \sup_{x\in R^1} \Big|u(x,t)-u_0(x-t)-G_1(x-t) \Big|=0,
\end{equation}
and
\begin{equation}\label{eq-thm-asm4}
\lim\limits_{t\to\infty} \sup_{x\in R^1} \Big|v(x,t)-v_0(x+t)-G_2(x+t) \Big|=0.
\end{equation}
\end{theorem}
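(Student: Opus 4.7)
The starting point is the characteristic representation: since $u$ propagates along lines $x-t=\mathrm{const}$ and $v$ along $x+t=\mathrm{const}$, integrating each equation of \eqref{eq-dirac} along its own null line gives
\[
u(x,t) = u_0(x-t) - i\int_0^t N_1(u,v)(x-t+\tau, \tau)\,d\tau,
\]
and the analogous formula for $v$. Substituting the definitions of $G_1,G_2$ yields the clean reduction
\[
u(x,t)-u_0(x-t)-G_1(x-t) \;=\; i\int_t^\infty N_1(u,v)(x-t+\tau,\tau)\,d\tau,
\]
together with its counterpart for $v$ along the left-moving characteristic, so that \eqref{eq-thm-asm1}--\eqref{eq-thm-asm4} amount to showing that the tails above tend to zero in $L^2_x\cap L^\infty_x$ as $t\to\infty$. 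Throughout, I would work in the co-moving variable $y=x\mp t$.

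The essential input is a space--time bilinear bound
\[
\int_0^\infty\!\!\int_{-\infty}^\infty |u(x,t)|^2|v(x,t)|^2\,dx\,dt \;\le\; C,
\]
reflecting the fact that $|u|^2$ and $|v|^2$ travel in opposite directions and so can only interact on a region of finite space--time measure. I would prove it by monotonicity of the interaction potential $\mathcal E(t):=\iint_{x_1<x_2}|u(x_1,t)|^2|v(x_2,t)|^2\,dx_1\,dx_2$, computing $\tfrac{d}{dt}\mathcal E(t)$ via the scalar identities $\partial_t|u|^2+\partial_x|u|^2=2\operatorname{Im}(\bar u\,N_1)$ and $\partial_t|v|^2-\partial_x|v|^2=2\operatorname{Im}(\bar v\,N_2)$ that follow from \eqref{eq-dirac}; the boundary $x_1=x_2$ produces the desired $-\int|u|^2|v|^2\,dx$ term, while the Gross--Neveu cross contribution $\operatorname{Im}((\bar uv)^2)$ is absorbed into a Gronwall step applied to $\mathcal E$. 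The hypothesis $s>\tfrac12$ then supplies, via Sobolev embedding and conservation of the total charge $\int(|u|^2+|v|^2)\,dx$, a uniform pointwise bound $\|u(t)\|_\infty+\|v(t)\|_\infty\le M$; since $|N_j|\le C(|u||v|^2+|u|^2|v|)$ this gives first $N_j\in L^2_{t,x}$ and, after combining the bilinear bound with $M$, the sharper $\int_0^\infty\|N_j(\cdot,\tau)\|_{L^2_x}\,d\tau<\infty$.

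The $L^2_y$-vanishing statements are then a direct consequence of Minkowski's inequality,
\[
\Big\|\int_t^\infty N_1(\cdot+\tau,\tau)\,d\tau\Big\|_{L^2_y} \;\le\; \int_t^\infty\|N_1(\cdot,\tau)\|_{L^2_x}\,d\tau \;\longrightarrow\; 0\qquad(t\to\infty),
\]
which gives \eqref{eq-thm-asm1}; the identical argument for $N_2$ yields \eqref{eq-thm-asm2}. For the uniform statements \eqref{eq-thm-asm3}--\eqref{eq-thm-asm4} I would combine this $L^2$-decay with a uniform-in-$t$ $H^s$-bound on the remainder (using the $H^s$-algebra property for $s>\tfrac12$ applied to $N_1,N_2$, from which in particular $G_j\in H^s$) together with the one-dimensional Gagliardo--Nirenberg interpolation $\|f\|_\infty\le C\|f\|_{L^2}^{1-\frac{1}{2s}}\|f\|_{H^s}^{\frac{1}{2s}}$, forcing the $L^\infty$-norm of the remainder to tend to zero.

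I expect the main obstacle to be the bilinear space--time estimate in the Gross--Neveu regime $\beta\neq 0$: in the pure Thirring case $(\beta=0)$ the densities $|u|^2,|v|^2$ are individually conserved along their characteristics and the bound reduces to a clean change of variables, but when $\beta\neq 0$ the indefinite sign of the cross term $\operatorname{Im}((\bar uv)^2)$ forces one to extract a genuine cancellation, most likely through a modified interaction functional adapted to the coupling. A secondary technical point is upgrading the $L^2_tL^2_x$ control of $N_j$ to the $L^1_tL^2_x$ control demanded by the Minkowski step; this has to be teased out of the uniform $L^\infty$-bound combined with the time-decay of $\|u(\cdot,t)v(\cdot,t)\|_{L^2_x}$ that is implicitly encoded by the bilinear estimate.
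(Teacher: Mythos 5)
Your reduction via the characteristic representation to showing that the tails $i\int_t^\infty N_1(u,v)(x-t+\tau,\tau)\,d\tau$ vanish in $L^2_y\cap L^\infty_y$ is exactly the paper's starting point, but the way you propose to kill the tails has genuine gaps at precisely the two places you yourself flag, and neither is a mere technicality.

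First, even granting your bilinear space--time bound $\iint |u|^2|v|^2\,dx\,dt\le C$, your Minkowski step needs $\int_0^\infty\|N_1(\cdot,\tau)\|_{L^2_x}\,d\tau<\infty$, i.e.\ $N_1\in L^1_tL^2_x$; the bilinear bound together with $\|v\|_\infty\le M$ only gives $\|N_1(\cdot,\tau)\|_{L^2_x}\in L^2_t$, and an $L^2$ function of $\tau$ on $(0,\infty)$ need not be integrable. Nothing in your argument supplies the extra decay, so the $L^2$ convergence does not close. The paper avoids this issue entirely by never taking a norm in $x$ before integrating in $\tau$: it first proves the pointwise characteristic bounds $|u(x,t)|^2\le e^{8|\beta|C_0}|u_0(x-t)|^2$ and $|v(x,t)|^2\le e^{8|\beta|C_0}|v_0(x+t)|^2$ (Huh's estimate, obtained from $\frac{d}{ds}|u(x-t+s,s)|^2\le 8|\beta|\,|u|^2|v|^2$ by Gronwall, using that $\int_0^t|v(x-t+s,s)|^2\,ds$ is controlled by the initial charge via integrating the conservation law $(|u|^2+|v|^2)_t+(|u|^2-|v|^2)_x=0$ over a backward triangle). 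This gives, for fixed $y=x-t$, the pointwise-in-$y$ bound $\int_t^\infty|u||v|^2\,d\tau\le C|u_0(y)|\int_{y+2t}^\infty|v_0(\sigma)|^2\,d\sigma$, and then both the $L^2$ statement (dominated convergence) and, combined with $u_0(y)\to0$ as $|y|\to\infty$, the $L^\infty$ statement follow. These same pointwise bounds also yield your bilinear estimate for free, so the interaction-functional machinery (whose Gross--Neveu cross term $\operatorname{Im}((\bar uv)^2)$ you correctly identify as problematic and leave unresolved) is both harder and unnecessary.

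Second, your route to the uniform statements \eqref{eq-thm-asm3}--\eqref{eq-thm-asm4} interpolates the $L^2$ decay against a claimed uniform-in-$t$ $H^s$ bound on the tail $\int_t^\infty N_1(\cdot+\tau,\tau)\,d\tau$. That bound is not established: it would require something like $L^1_t H^s_x$ control of $N_1$, which is even stronger than the missing $L^1_tL^2_x$ control above, and the global $H^s$ norms of the solution are not known to be uniformly bounded in time. The paper instead gets $L^\infty$ decay directly from the pointwise characteristic bounds, splitting $x\le M$ (where $\sup_{y\le M}|u_0(y)|$ is small by the vanishing of $u_0$ at infinity, Lemma \ref{lemma-initialdata-asymp}) from $x\ge M$ (where $\int_{M+t}^\infty|v_0|^2\to0$). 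In short: the missing idea is the characteristic-wise pointwise estimate of Lemma \ref{lemma-PointwiseEstimate}; once you have it, everything else in your plan either follows immediately or becomes unnecessary.
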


\begin{theorem}\label{thm-asmL2}
For any strong solution $(u,v)\in C([0,\infty),L^2(R^1))$, there exists a pair of functions $(g_1,g_2)\in L^2(R^1)$ such that
\begin{equation}\label{eq-asym1}
 \lim\limits_{t\to\infty} \int_{-\infty}^{\infty} |u(x,t)-u_0(x-t)-g_1(x-t)|^2 dx =0,
\end{equation}
and
\begin{equation}\label{eq-asym2}
\lim\limits_{t\to\infty} \int_{-\infty}^{\infty} |v(x,t)-v_0(x+t)-g_2(x+t)|^2 dx =0.
\end{equation}
\end{theorem}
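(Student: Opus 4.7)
The plan is to run a density argument: approximate the $L^2$ strong solution by smoother $H^s$ solutions (with $s>1/2$) for which Theorem \ref{thm-asm} produces explicit asymptotic profiles $G_1^n, G_2^n$, and then pass to the limit to define $g_1, g_2 \in L^2(R^1)$.

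First, I would pick a sequence $(u_0^n, v_0^n) \in H^s(R^1) \times H^s(R^1)$ with $(u_0^n, v_0^n) \to (u_0, v_0)$ in $L^2(R^1)$, and denote by $(u^n, v^n)$ the corresponding smoother solutions provided by the cited $H^s$ well-posedness theory. The notion of $L^2$ strong solution, together with the global $L^2$ well-posedness and stability results of \cite{candy,huh-moon,zhang-zhao}, should yield the uniform-in-time continuous dependence
\[
\sup_{t\ge 0}\|(u^n-u, v^n-v)(\cdot,t)\|_{L^2(R^1)} \longrightarrow 0
\]
as $n \to \infty$. Second, Theorem \ref{thm-asm} applied to each $(u^n, v^n)$ furnishes $G_1^n, G_2^n \in L^2(R^1)$ with
\[
\lim_{t\to\infty}\|u^n(\cdot,t) - u_0^n(\cdot-t) - G_1^n(\cdot-t)\|_{L^2(R^1)} = 0,
\]
and the analogous identity for $v^n$ and $G_2^n$.

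Third, I would show that $\{G_1^n\}$ and $\{G_2^n\}$ are Cauchy in $L^2(R^1)$. Subtracting the two asymptotic identities for indices $n$ and $m$ and using translation invariance of $L^2$ together with the triangle inequality, one gets
\[
\|G_1^n - G_1^m\|_{L^2} \le \|u_0^n - u_0^m\|_{L^2} + \sup_{t\ge 0}\|(u^n - u^m)(\cdot,t)\|_{L^2} + \varepsilon_n(t) + \varepsilon_m(t),
\]
where $\varepsilon_n(t), \varepsilon_m(t) \to 0$ as $t\to\infty$. Sending $t\to\infty$ and then $n,m\to\infty$ gives the Cauchy property, and one defines $g_1, g_2$ as the resulting $L^2$-limits. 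A symmetric argument handles $G_2^n$.

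Finally, a standard $3\varepsilon$-argument closes the proof of \eqref{eq-asym1}--\eqref{eq-asym2}: given $\varepsilon>0$, fix $n$ so large that the initial data error, the uniform-in-time solution error, and $\|g_1-G_1^n\|_{L^2}$ are each smaller than $\varepsilon$; then let $t\to\infty$ and invoke Theorem \ref{thm-asm} for the $n$-th approximating solution. The principal obstacle is the first step, namely the uniform-in-time $L^2$-continuous dependence on initial data: one cannot obtain it by a naive Gr\"onwall estimate on the $L^2$ difference (since the nonlinearities are cubic and $L^2$ regularity alone does not control $N_j(u,v)-N_j(u^n,v^n)$), and one must instead appeal to the null-form/conservation-law structure exploited in the cited $L^2$ well-posedness results for the Thirring and Gross-Neveu models.
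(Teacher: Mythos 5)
Your overall strategy (approximate, apply the smooth-solution asymptotics, pass to the limit) has the same shape as the paper's, but the step you yourself flag as ``the principal obstacle'' is a genuine gap, and it is precisely the step the paper is organized to avoid. Your argument needs $\sup_{t\ge 0}\|(u^n-u)(\cdot,t)\|_{L^2}\to 0$, i.e.\ continuous dependence in $L^2$ that is uniform over the whole time half-line, both to make $\{G_1^n\}$ Cauchy in the strong $L^2$ topology and to close the final $3\varepsilon$ argument. No such estimate is proved in the paper or supplied by the notion of strong solution: Definition \ref{def-weaksolution} and the convergences (\ref{eq-conv1})--(\ref{eq-conv3}) imported from \cite{zhang-zhao} all live on finite slabs $R^1\times[0,T]$, and a Gr\"onwall bound on the difference of two solutions degrades like $e^{Ct}$ even with the pointwise bounds of Lemma \ref{lemma-PointwiseEstimate}. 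Appealing to ``null-form structure'' does not discharge this; a globally-in-time uniform $L^2$ Lipschitz dependence for the Gross--Neveu nonlinearity is exactly what one does not have in hand. (A secondary issue: you would also need to know that the $H^s$ solutions launched from approximating data converge to the \emph{given} strong solution, which is a uniqueness question in $L^2$; the paper sidesteps it by working with the approximating sequence built into the definition of strong solution.)

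The paper's proof shows how to do without uniform-in-time stability, via two devices. First, uniformity as $t\to\infty$ comes not from stability of the flow but from Lemma \ref{lemma-L2bound}: the tail $F_1^k(\cdot,t)$ of the Duhamel integral is bounded in $L^2$ purely in terms of the \emph{initial data} $(u_0^{(k)},v_0^{(k)})$, which do converge strongly in $L^2$, whence $\sup_k\|F_1^k(\cdot,t)\|_{L^2}\to 0$ as $t\to\infty$ (Lemma \ref{lemma-remainder}). Second, the profiles $G_1^k(\cdot,\infty)$ are never shown to converge strongly in $L^2$; one only extracts an $L^1_{loc}$ Cauchy property (from the product convergences (\ref{eq-conv2})--(\ref{eq-conv3}) via Lemma \ref{lemma-nonlinearterm}) together with a uniform $L^2$ bound, hence weak $L^2$ convergence to some $g_1$ (Lemma \ref{lemma-asymL2-3}). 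For each \emph{fixed} $t$ the finite-time convergence (\ref{eq-conv1}) suffices to pass to the weak limit in the Duhamel identity, and weak lower semicontinuity of the $L^2$ norm then gives $\|u(\cdot,t)-u_0(\cdot-t)-g_1(\cdot-t)\|_{L^2}^2\le\sup_k\|F_1^k(\cdot,t)\|_{L^2}^2$, which tends to $0$. Replacing your uniform-in-time stability step with these two observations turns your outline into the paper's proof.
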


Here the strong solution to (\ref{eq-dirac}) and (\ref{eq-dirac-initialv}) is defined in following sense.

\begin{definition}\label{def-weaksolution}
A pair of functions $(u,v)\in C([0,\infty); L^2(R^1))$ is called a strong solution to $(\mathrm{\ref{eq-dirac}})$ and $(\mathrm{\ref{eq-dirac-initialv}})$ on $R^1\times [0,\infty)$ if there exits a sequence of classical solutions $(u^{(n)}, v^{(n)}) $ to $(\mathrm{\ref{eq-dirac}})$ on $R^1\times [0,\infty)$ such that
\[ \lim\limits_{n\to\infty} \big( ||u^{(n)}-u||_{L^2(R^1\times [0,T])}+||v^{(n)}-v||_{L^2(R^1\times [0,T])} \big)=0, \]
and
\[ \lim\limits_{n\to\infty} \big( ||u^{(n)}(\cdot,0)-u_0||_{L^2(R^1)}+||v^{(n)}(\cdot,0)-v_0||_{L^2(R^1)} \big)=0,\]
for any $T>0$.
\end{definition}
\begin{remark}
The strong solution has been defined for the linear hyperbolic equations, see for instance, \textnormal{\cite{lax,schecter}}. Here we define the strong solution for nonlinear dirac equations in the same way as in \textnormal{\cite{schecter}}.
\end{remark}
\begin{remark}
For any sequence of classical solutions $(u^{(n)}, v^{(n)}) $ given as in Definition \textnormal{\ref{def-weaksolution}}, it has been shown in \textnormal{\cite{zhang-zhao}} that for any $T>0$, there holds the following,
\[ \lim\limits_{n\to\infty} ||u^{(n)}v^{(n)}-uv||_{L^2(R^1\times [0,T])}=0.\]
Therefore, the strong solution $(u,v)$ is also a weak solution to $(\mathrm{\ref{eq-dirac}})$ and $(\mathrm{\ref{eq-dirac-initialv}})$ in distribution sense.
\end{remark}

The remaining of the paper is organized as follows. In Section 2 we first recall Huh's result on the global well-posedness in $C([0,\infty); H^s(R^1))$ for $s>1/2$ for (\ref{eq-dirac}).  Then we  use the characteristic method to establish asymptotic estimates on the global classical solutions and prove Theorem \ref{thm-asm}. In section 3 we prove Theorem \ref{thm-asmL2}  for the case of strong solution.

\section{Asymptotic estimates on the global classical solutions }
The global existence of the solution in $C([0,+\infty); H^s(R^1))\cap C^1([0,+\infty);H^{s-1}(R^1))$ to (\ref{eq-dirac}) and (\ref{eq-dirac-initialv}) for $s>\frac{1}{2}$ has been obtained by Huh in \cite{huh2}. We consider the smooth solution $(u,v)\in C^1 (R^1\times [0,\infty))$ to (\ref{eq-dirac}) and (\ref{eq-dirac-initialv}) in this section.

Multiplying the first equation of (\ref{eq-dirac}) by $\overline{u}$   and the second equation by $\overline{v}$ gives
\begin{equation}\label{eq-dirac1}
\left\{\begin{array}{l}
(|u|^2)_t +(|u|^2)_x=2\Re (i\overline{N_1}u), \\  (|v|^2)_t-(|v|^2)_x=2\Re (i\overline{N_2}v),
\end{array}\right.
\end{equation}
 which, together with the structure of nonlinear terms, leads to the following,
\begin{equation}\label{eq-dirac-conserv}
(|u|^2+|v|^2)_t +(|u|^2-|v|^2)_x=0.
\end{equation}

Now we consider the solution  $(u,v)$ in the triangle domain. For any $a, b\in R^1$ with $a<b$ and for any $t_0\ge 0$, we denote
\[
\Delta(a,b,t_0)=\{ (x,t)\big|\, a-t_0+t<x<b+t_0-t, \, t_0<t<\frac{b-a}{2}+t_0\},
\]
 see Fig. \ref{fig-domain}.
\begin{figure}[h]
\begin{center}
\unitlength=10mm
\begin{picture}(10,3.5)
\thicklines
\put(0,0){\line(1,0){10}}
\put(1,0){\line(5,4){4}}
\put(9,0){\line(-5,4){4}}
\put(0,-0.5){$(a,t_0)$}
\put(9,-0.5){$(b,t_0)$}\put(10.3,0){$t=t_0$}
\put(3.5,3.5){$(\frac{a+b}{2},\frac{b-a}{2}+t_0)$}
\put(4,1.5){$\Delta(a,b,t_0)$}
%\put(0.6,1.5){$\Gamma_L(x_0,t_0)$}
%\put(7.2,1.5){$\Gamma_R(x_0,t_0)$}
%\put(3,-2){Domain ${\triangle(x_0,t_0;t_1)}$}
\end{picture}
\caption{Domain $\Delta(a,b,t_0)$}\label{fig-domain}
\end{center}
\end{figure}
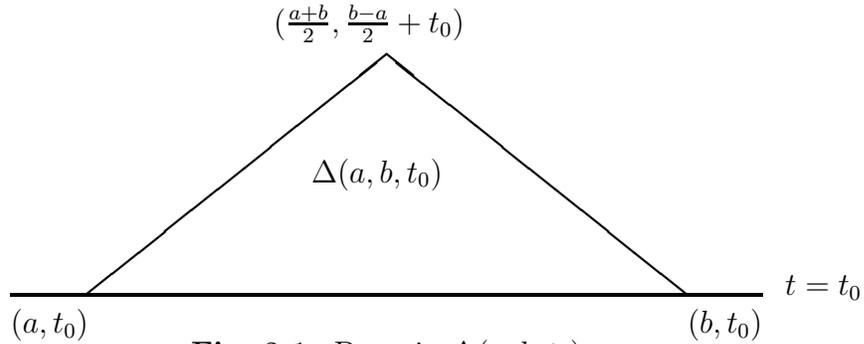
It is obvious that $\Delta(a,b,t_0)$ is bounded by two characteristic lines and $t=t_0$. The vertices of $\Delta(a,b,t_0)$ are $(a,t_0)$, $(b,t_0)$ and $(\frac{a+b}{2}, \frac{b-a}{2}+t_0)$.

\begin{lemma}\label{lemma-conv-charge-1}
For any $\tau\in [t_0, \frac{b-a}{2}+t_0]$, there holds that
\begin{eqnarray*}
\int_{a-t_0+\tau}^{b+t_0-\tau} \big( |u(x,\tau)|^2+|v(x,\tau)|^2\big)dx +2 \int_{t_0}^{\tau} |u(b+t_0-s,s)|^2 ds \\ + 2\int_{t_0}^{\tau} |v(a-t_0+s,s)|^2 ds = \int_a^b \big( |u(x,t_0)|^2 +|v(x,t_0)|^2\big) dx.
\end{eqnarray*}
\end{lemma}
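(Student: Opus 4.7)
The plan is to integrate the pointwise conservation law (\ref{eq-dirac-conserv}) over the truncated triangular region $\Delta_\tau := \Delta(a,b,t_0)\cap\{t_0\le t\le \tau\}$ and apply Green's theorem, reading off the boundary flux on each of the four sides. Since $(u,v)$ is assumed classical in this section, (\ref{eq-dirac-conserv}) holds pointwise and $\partial\Delta_\tau$ is piecewise smooth, so no regularization is required.

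Writing $Q=|u|^2+|v|^2$ and $P=|u|^2-|v|^2$, the identity (\ref{eq-dirac-conserv}) becomes $Q_t+P_x=0$, so that the $1$-form $\omega = P\,dt - Q\,dx$ is closed, and Green's theorem yields $\oint_{\partial\Delta_\tau}\omega = 0$ when $\partial\Delta_\tau$ is traversed counterclockwise. There are four boundary pieces to account for. On the bottom segment $\{t=t_0,\ a\le x\le b\}$ one has $dt=0$, so the contribution is $-\int_a^b Q(x,t_0)\,dx$. On the right characteristic, parameterized by $s\mapsto(b+t_0-s,s)$ for $s\in[t_0,\tau]$, one has $dx=-ds$ and $dt=ds$, so $\omega = (P+Q)\,ds = 2|u|^2\,ds$, yielding $2\int_{t_0}^\tau |u(b+t_0-s,s)|^2\,ds$. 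On the top edge $\{t=\tau\}$, traversed right-to-left, the contribution is $\int_{a-t_0+\tau}^{b+t_0-\tau} Q(x,\tau)\,dx$. On the left characteristic, parameterized in the natural downward direction by $s\mapsto(a-t_0+s,s)$ with $s$ decreasing from $\tau$ to $t_0$, one has $dx=ds=dt$, so $\omega=(P-Q)\,ds=-2|v|^2\,ds$; after reversing the limits of integration this contributes $2\int_{t_0}^\tau |v(a-t_0+s,s)|^2\,ds$. Setting the sum of these four terms equal to zero and rearranging produces exactly the stated identity.

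There is essentially no obstacle beyond careful bookkeeping of signs and orientations on the two characteristic sides. The neat algebraic collapses $P+Q=2|u|^2$ and $Q-P=2|v|^2$ are precisely the reason the identity is so clean: the right-moving mode $u$ alone carries flux across the right characteristic, and symmetrically $v$ alone carries flux across the left characteristic. Nothing beyond the classical divergence theorem is invoked.
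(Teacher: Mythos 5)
Your proposal is correct and is exactly the paper's argument: the authors' proof consists of the single line ``integrate (\ref{eq-dirac-conserv}) over $\Omega(a,b,t_0,\tau)$ as in Huh,'' and your Green's-theorem bookkeeping (with the collapses $P+Q=2|u|^2$ and $Q-P=2|v|^2$ on the two characteristics) simply carries out that integration in detail, with all signs and orientations handled correctly.
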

\begin{proof}
As in  Huh \cite{huh2}, we can get the result by taking the integration  of (\ref{eq-dirac-conserv}) over the domain
\[\Omega(a,b,t_0, \tau)=\{ (x,t)\big|\, a-t_0+t<x<b+t_0-t, \, t_0<t<\tau\}. \]
The proof is complete.
\end{proof}
A special example of Lemma \ref{lemma-conv-charge-1} is the following estimate on the domain $\Delta(x_0-t_0, x_0+t_0,0)$ for any $x_0\in R^1$ and $t_0>0$,
\begin{equation}\label{eq-conser-charge-1}
 2\int_0^{t_0} |u(x_0+t_0-s,s)|^2ds
 +2\int_0^{t_0} |v(x_0-t_0+s,s)|^2ds = \int_{x_0-t_0}^{x_0+t_0} (|u_0(x)|^2+|v_0(x)|^2)dx.
\end{equation}

With above lemma, we can derive the following pointwise estimates on the triangle via the characteristic method.

\begin{lemma}\label{lemma-PointwiseEstimate}
Suppose that $\int^{\infty}_{-\infty}(|u_0(x)|^2+|v_0(x)|^2)dx< C_0$ for some constant $C_0>0$. Then there hold that
\begin{equation}\label{eq-pointwise1}
|u(x,t)|^2 \le e^{8|\beta|C_0}\ |u_0(x-t)|^2 ,
\end{equation}
and
\begin{equation}\label{eq-pointwise2}
|v(x,t)|^2\le e^{8|\beta|C_0} |v_0(x+t)|^2,
\end{equation}
for $x\in R^1,\,\, t\ge 0$.
\end{lemma}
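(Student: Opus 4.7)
The plan is to reduce (\ref{eq-dirac}) along the right-moving characteristics to an ODE for $|u|^2$, apply Gronwall's inequality, and invoke the charge estimate (\ref{eq-conser-charge-1}) to bound the exponent.

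First, fix $(x,t)$ with $t>0$, set $x_0=x-t$, and introduce the characteristic traces $U(s)=u(x_0+s,s)$ and $V(s)=v(x_0+s,s)$ for $s\in[0,t]$, so that $U(0)=u_0(x-t)$ and $U(t)=u(x,t)$. From the first equation of (\ref{eq-dirac}), $U'(s)=u_t+u_x=-iN_1(u,v)$, hence
\[
\tfrac{d}{ds}|U(s)|^2 = -iN_1\overline{U}+i\overline{N_1}U = 2\Im\bigl(N_1\overline{U}\bigr).
\]

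Second, I would insert the specific form of $N_1$ into this imaginary part. Writing $w=\overline{u}v$, the $\alpha$--contribution $\alpha|u|^2|v|^2$ is real, so its imaginary part vanishes; this is exactly the cancellation already producing the local charge conservation law (\ref{eq-dirac-conserv}). The $\beta$--contribution equals $2\beta(w+\overline{w})w = 4\beta\Re(w)\,w$, whose imaginary part $4\beta\Re(w)\Im(w)$ is bounded in absolute value by $4|\beta||w|^2=4|\beta||u|^2|v|^2$. Therefore
\[
\tfrac{d}{ds}|U(s)|^2 \;\le\; 8|\beta|\,|U(s)|^2\,|V(s)|^2.
\]

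Third, Gronwall's inequality then yields
\[
|u(x,t)|^2 \;\le\; |u_0(x-t)|^2 \exp\!\Bigl(8|\beta|\int_0^t |v(x-t+s,s)|^2\,ds\Bigr).
\]
The path $s\mapsto(x-t+s,s)$ is precisely the left boundary of the triangle $\Delta(x-t,x+t,0)$, so (\ref{eq-conser-charge-1}) bounds the exponent by $8|\beta|\cdot\tfrac{1}{2}\int_{x-t}^{x+t}(|u_0|^2+|v_0|^2)\,dx\le 8|\beta|C_0$, proving (\ref{eq-pointwise1}). The estimate (\ref{eq-pointwise2}) follows by the symmetric argument along the left-moving characteristic $s\mapsto(x+t-s,s)$, using the second equation of (\ref{eq-dirac}) and the right boundary of the same triangle.

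The main obstacle is the second step: the statement involves only $|\beta|$ (never $|\alpha|$), so the proof must exploit the fact that the $\alpha$--part of $N_1\overline{u}$ is real-valued and contributes nothing to the growth of $|u|^2$ along a characteristic. A brute-force bound $|N_1|\le(|\alpha|+4|\beta|)|u||v|^2$ would bring $|\alpha|$ into the exponent and spoil the statement; one has to use the same structural cancellation that underlies (\ref{eq-dirac-conserv}). Once that is observed, the rest is a routine Gronwall argument together with the triangle charge identity already proved in Lemma \ref{lemma-conv-charge-1}.
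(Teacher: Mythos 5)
Your proposal is correct and follows essentially the same route as the paper: the differential inequality $\frac{d}{ds}|u(x-t+s,s)|^2\le 8|\beta|\,|u|^2|v|^2$ along the right-moving characteristic (exploiting that the $\alpha$-part of $N_1\overline{u}$ is real), Gronwall, and then the charge identity (\ref{eq-conser-charge-1}) to bound the exponent. Your explicit verification that only the $\beta$-term contributes to $\Im(N_1\overline{u})$ is the structural point the paper leaves implicit, and your constant is even slightly better ($4|\beta|C_0$ in the exponent) than the stated $8|\beta|C_0$.
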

 \begin{remark} We remark that the estimates $(\mathrm{\ref{eq-pointwise1}})$ and $(\mathrm{\ref{eq-pointwise2}})$ in Lemma $\mathrm{\ref{lemma-PointwiseEstimate}}$ have been proved by Huh in $\textup{\cite{huh2}}$ for $R^1\times [0,\infty)$ and we give the sketch of the proof here. Indeed, $(\mathrm{\ref{eq-dirac1}})$  gives
\begin{eqnarray*}
\frac{d}{ds} |u(x-t+s, s)|^2 \le 8|\beta| |u(x-t+s,s)|^2|v(x-t+s,s)|^2 ,
\end{eqnarray*}
and
\begin{eqnarray*}
\frac{d}{ds} |v(x+t-s, s)|^2 \le 8|\beta| |u(x+t-s,s)|^2|v(x+t-s,s)|^2.
\end{eqnarray*}
Then, taking the integration of the above from $0$ to $t$ yields that
\begin{eqnarray*}
|u(x,t)|^2 \le \exp(  8|\beta|\int_0^t|v(x-t+s,s)|^2 ds)
|u_0(x-t)|^2 ,
\end{eqnarray*}
which implies the estimate $(\mathrm{\ref{eq-pointwise1}})$ on $u$ by $(\mathrm{\ref{eq-conser-charge-1}})$. The estimate $(\mathrm{\ref{eq-pointwise2}})$ on $v$ could be derived in the same way.
\end{remark}

\begin{lemma}\label{lemma-L2bound}
There exists a constant $C>0$, such that for any $t\ge 0$, there hold the following,
\begin{eqnarray*}
\int^{\infty}_{-\infty} |F_1(y,t)|^2 dy \le C \int^{\infty}_{-\infty} |u_0(y)|^2 \Big(\int^{\infty}_{y+2t} |v_0(\tau)|^2d\tau\Big)^2 dy,
\end{eqnarray*}
and
\begin{eqnarray*}
\int^{\infty}_{-\infty} |F_2(y,t)|^2 dy \le C \int^{\infty}_{-\infty} |v_0(y)|^2 \Big(\int_{-\infty}^{y-2t} |u_0(\tau)|^2d\tau\Big)^2 dy,
\end{eqnarray*}
 where
\begin{align*}
&F_1(y,t)=\int^{\infty}_t |u(y+s,s)||v(y+s,s)|^2 ds,
\end{align*}
and
\begin{align*}
&F_2(y,t)=\int^{\infty}_t |v(y-s,s)||u(y-s,s)|^2 ds.
\end{align*}
\end{lemma}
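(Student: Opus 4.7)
\medskip

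My plan is to exploit that the integrand in $F_1$ lives on a $u$-characteristic, so after a change of variable the whole estimate reduces to the pointwise bounds of Lemma \ref{lemma-PointwiseEstimate}. Concretely, note that $s\mapsto(y+s,s)$ parametrizes the characteristic through $(y,0)$ for the first equation. Substituting $\xi=y+s$ rewrites
\[
F_1(y,t)=\int_{y+t}^{\infty}|u(\xi,\xi-y)|\,|v(\xi,\xi-y)|^2\,d\xi .
\]

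Next, I would invoke Lemma \ref{lemma-PointwiseEstimate} along this characteristic: with $x=\xi$ and $t=\xi-y$ the bounds read
\[
|u(\xi,\xi-y)|\le e^{4|\beta|C_0}|u_0(y)|,\qquad |v(\xi,\xi-y)|^2\le e^{8|\beta|C_0}|v_0(2\xi-y)|^2 .
\]
The key gain is that the factor $|u_0(y)|$ is independent of $\xi$ and can be pulled outside the integral, leaving only the $|v_0|^2$ term. A second change of variable $\eta=2\xi-y$ (so $d\eta=2\,d\xi$ and $\xi=y+t\Leftrightarrow\eta=y+2t$) then gives
\[
F_1(y,t)\le \tfrac{1}{2}e^{12|\beta|C_0}|u_0(y)|\int_{y+2t}^{\infty}|v_0(\eta)|^2\,d\eta .
\]
Squaring and integrating in $y$ immediately yields the first inequality of the lemma. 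The estimate for $F_2$ is obtained by the symmetric substitution $\xi=y-s$ and the same pair of pointwise bounds, which after the corresponding $\eta=2\xi-y$ rescaling produces $\int_{-\infty}^{y-2t}|u_0|^2\,d\eta$.

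The main thing to watch out for is the temptation to estimate $\int_t^{\infty}|v(y+s,s)|^2\,ds$ via the conservation identity of Lemma \ref{lemma-conv-charge-1} on a triangle with left edge the characteristic $x=y+s$; this route works but, after applying the pointwise bounds to $(|u|^2+|v|^2)(x,t)$ on the base slice $x>y+t$, it produces an unwanted contribution $\int_y^{\infty}|u_0|^2\,dx$ that does not fit the target bound. The detour through the pointwise estimate avoids this, so the only real content is the two elementary changes of variable; no delicate step remains.
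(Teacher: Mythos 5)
Your proposal is correct and follows essentially the same route as the paper: apply the pointwise bounds of Lemma \ref{lemma-PointwiseEstimate} along the characteristic to pull out the $\xi$-independent factor $|u_0(y)|$, change variables in the remaining $|v_0|^2$ integral to obtain $\int_{y+2t}^{\infty}|v_0(\eta)|^2\,d\eta$, then square and integrate in $y$. The preliminary substitution $\xi=y+s$ is a cosmetic difference only; the paper works directly in the $s$ variable with $\tau=y+2s$.
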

\begin{proof}
By Lemma \ref{lemma-PointwiseEstimate}, we have
\begin{align*}
|F_1(y,t)|^2 &\le  C |u_0(y)|^2 \Big( \int^{\infty}_t |v_0(y+2s)|^2 ds \Big)^2,
\end{align*}
for some constant $C>0$. Then
\begin{align*}
\int^{\infty}_{-\infty} |F_1(y,t)|^2 dx
&\le  C \int^{\infty}_{-\infty} |u_0(y)|^2 \Big( \int^{\infty}_{t} |v_0(y+2s)|^2 ds \Big)^2 dy \\
&\le  C \int^{\infty}_{-\infty} |u_0(y)|^2 \Big( \int^{\infty}_{y+2t} |v_0(\tau)|^2 d\tau \Big)^2 dy.
\end{align*}
The inequality for $F_2$ could be proved in the same way. The proof is complete.
\end{proof}

\begin{lemma}\label{lemma-asymL2}
There hold that
\[
\lim\limits_{t\to+\infty} \int^{\infty}_{-\infty} \Big(\int^{\infty}_t |u(x-t+s,s)||v(x-t+s,s)|^2 ds \Big)^2 dx=0,
 \]
 and
 \[
\lim\limits_{t\to+\infty} \int^{\infty}_{-\infty} \Big(\int^{\infty}_t |v(x+t-s,s)||u(x+t-s,s)|^2 ds \Big)^2 dx=0.
\]
\end{lemma}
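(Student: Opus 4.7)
The plan is to reduce both integrals to the $F_1, F_2$ quantities bounded in Lemma~\ref{lemma-L2bound} via a change of variables, then invoke the dominated convergence theorem.

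\textbf{Step 1 (Change of variables).} For fixed $t$, in the first integral substitute $y = x - t$ so that $x = y+t$ and $dx = dy$. The integrand $\int_t^\infty |u(x-t+s,s)||v(x-t+s,s)|^2\,ds$ becomes exactly $F_1(y,t)$ as defined in Lemma~\ref{lemma-L2bound}. Hence
\[
\int_{-\infty}^\infty \Big(\int_t^\infty |u(x-t+s,s)||v(x-t+s,s)|^2\,ds\Big)^2 dx = \int_{-\infty}^\infty |F_1(y,t)|^2\,dy.
\]
The analogous substitution $y = x+t$ turns the second integral into $\int_{-\infty}^\infty |F_2(y,t)|^2\,dy$.

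\textbf{Step 2 (Apply Lemma~\ref{lemma-L2bound}).} From Lemma~\ref{lemma-L2bound},
\[
\int_{-\infty}^\infty |F_1(y,t)|^2 dy \le C\int_{-\infty}^\infty |u_0(y)|^2 \Big(\int_{y+2t}^\infty |v_0(\tau)|^2 d\tau\Big)^2 dy,
\]
and similarly for $F_2$, so it suffices to show the right-hand side tends to $0$ as $t\to\infty$.

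\textbf{Step 3 (Dominated convergence).} Let
\[
\varphi_t(y) = |u_0(y)|^2 \Big(\int_{y+2t}^\infty |v_0(\tau)|^2 d\tau\Big)^2.
\]
Since $v_0\in L^2(R^1)$, for each fixed $y\in R^1$ the tail $\int_{y+2t}^\infty |v_0(\tau)|^2 d\tau \to 0$ as $t\to\infty$, so $\varphi_t(y)\to 0$ pointwise. Moreover $\varphi_t(y) \le \|v_0\|_{L^2}^4 |u_0(y)|^2$, which is integrable since $u_0\in L^2(R^1)$. The dominated convergence theorem then gives $\int \varphi_t(y)\,dy \to 0$. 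The same argument, with the dominating function $\|u_0\|_{L^2}^4|v_0(y)|^2$ and the tail $\int_{-\infty}^{y-2t}|u_0(\tau)|^2 d\tau$, handles the $F_2$ integral.

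\textbf{Expected main obstacle.} The argument is essentially a clean application of tail-of-$L^2$ estimates and dominated convergence; the only subtlety is that one must know $u_0,v_0\in L^2(R^1)$, which is consistent with the hypothesis in Lemma~\ref{lemma-PointwiseEstimate} used to prove Lemma~\ref{lemma-L2bound}. No hard estimate beyond what Lemma~\ref{lemma-L2bound} already encodes is needed.
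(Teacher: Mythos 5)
Your proof is correct and follows essentially the same route as the paper: change variables to reduce to $F_1,F_2$, apply Lemma \ref{lemma-L2bound}, and conclude by dominated convergence with the dominating function $\|v_0\|_{L^2}^4|u_0(y)|^2$ (the paper phrases this as the uniform bound $\sup_{y,t}\int_{y+2t}^{\infty}|v_0(\tau)|^2d\tau\le\|v_0\|_{L^2}^2$). No discrepancies to note.
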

\begin{proof}
By Lemma \ref{lemma-L2bound}, we have
\begin{align*}
&\int^{\infty}_{-\infty} \Big(\int^{\infty}_t |u(x-t+s,s)||v(x-t+s,s)|^2 ds \Big)^2 dx \\
= &\int^{\infty}_{-\infty} \Big(\int^{\infty}_t |u(y+s,s)||v(y+s,s)|^2 ds \Big)^2 dy \\
\le {}&  C \int^{\infty}_{-\infty} |u_0(y)|^2 \Big(\int^{\infty}_{y+2t} |v_0(\tau)|^2d\tau\Big)^2 dy,
\end{align*}
Since
\[
\sup\limits_{y,t} \int_{y+2t}^{\infty} |v_0(\tau)|^2 d\tau\le \int_{-\infty}^{\infty} |v_0(\tau)|^2d\tau,
\]
then the first inequality follows by Lebesgue's dominant convergence Theorem. The second inequality could be proved in the same way. The proof is complete.
\end{proof}

Before proving Theorem \ref{thm-asm}, let us state the following lemma.
\begin{lemma}\label{lemma-initialdata-asymp}
For $(u_0,v_0)\in H^s(R^1)$ with $s>\frac{1}{2}$, then it holds that
\[
\lim\limits_{|x|\to +\infty} (|u_0(x)|+|v_0(x)|)=0.
\]
\end{lemma}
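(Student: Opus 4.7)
The plan is to reduce the claim to a standard density/Sobolev embedding argument. Recall that for $s>\tfrac12$ one has the continuous embedding $H^s(R^1)\hookrightarrow L^\infty(R^1)$; in fact $H^s$ functions admit continuous representatives, and there exists a constant $C_s>0$ such that
\[
\|f\|_{L^\infty(R^1)}\le C_s\|f\|_{H^s(R^1)}\qquad\text{for all }f\in H^s(R^1).
\]
The strategy is to combine this with the density of $C_c^\infty(R^1)$ in $H^s(R^1)$.

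More concretely, first I would fix $\varepsilon>0$ and choose $\phi_\varepsilon,\psi_\varepsilon\in C_c^\infty(R^1)$ with
\[
\|u_0-\phi_\varepsilon\|_{H^s(R^1)}+\|v_0-\psi_\varepsilon\|_{H^s(R^1)}<\frac{\varepsilon}{2C_s}.
\]
Applying the Sobolev embedding then yields
\[
\|u_0-\phi_\varepsilon\|_{L^\infty(R^1)}+\|v_0-\psi_\varepsilon\|_{L^\infty(R^1)}<\frac{\varepsilon}{2}.
\]
Next, since $\phi_\varepsilon$ and $\psi_\varepsilon$ have compact supports, there exists $R_\varepsilon>0$ such that $\phi_\varepsilon(x)=\psi_\varepsilon(x)=0$ whenever $|x|>R_\varepsilon$. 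Hence for $|x|>R_\varepsilon$,
\[
|u_0(x)|+|v_0(x)|=|u_0(x)-\phi_\varepsilon(x)|+|v_0(x)-\psi_\varepsilon(x)|\le \|u_0-\phi_\varepsilon\|_{L^\infty}+\|v_0-\psi_\varepsilon\|_{L^\infty}<\varepsilon.
\]
Letting $|x|\to\infty$ and then $\varepsilon\to 0^+$ concludes the proof.

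No step is really an obstacle here; the only ingredient beyond approximation is the Sobolev embedding $H^s(R^1)\hookrightarrow L^\infty(R^1)$ for $s>\tfrac12$, which can be verified directly via Fourier transform and the Cauchy--Schwarz estimate $\|\widehat f\|_{L^1}\le \bigl(\int(1+|\xi|^2)^{-s}d\xi\bigr)^{1/2}\|f\|_{H^s}$, the integral being finite precisely when $s>\tfrac12$. Once this embedding is in hand, the compactly supported approximation argument above gives the stated decay at infinity.
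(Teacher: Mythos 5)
Your proof is correct, but it takes a different route from the paper. The paper works entirely on the Fourier side: it first shows $\widehat{u}_0\in L^1(R^1)$ by Cauchy--Schwarz (the same estimate you cite at the end), and then proves decay at infinity by the classical Riemann--Lebesgue translation trick, writing $\int \widehat{u}_0(\xi)e^{ix\xi}d\xi=\frac12\int\bigl(\widehat{u}_0(\xi)-\widehat{u}_0(\xi-\tfrac{\pi}{x})\bigr)e^{ix\xi}d\xi$ and using continuity of translation in $L^1$. You instead use the quantitative embedding $\|f\|_{L^\infty}\le C_s\|f\|_{H^s}$ together with density of $C_c^\infty$ in $H^s$: approximating $(u_0,v_0)$ uniformly by compactly supported functions forces decay outside a large ball. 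Both arguments rest on the same Cauchy--Schwarz bound $\|\widehat f\|_{L^1}\lesssim\|f\|_{H^s}$ for $s>\tfrac12$; the paper then reproves Riemann--Lebesgue by hand, while you trade that computation for the (equally standard) density of $C_c^\infty$ in $H^s$. Your version is arguably cleaner if one is willing to quote density, and it makes transparent that the decay is inherited from the compactly supported approximants; the paper's version is more self-contained on the Fourier side and avoids invoking density. One small point worth making explicit in your write-up: pointwise statements about $u_0\in H^s$ require passing to the continuous representative furnished by the embedding, which you do note, so there is no gap.
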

\begin{proof}
In fact, we have
\begin{align*}
\int_{R^1}|\widehat{u}_0(\xi)|d\xi&=\int_{R^1}|\widehat{u}_0(\xi)|(1+|\xi|^2)^{\frac{s}{2}}
(1+|\xi|^2)^{-\frac{s}{2}}d\xi\\
&\leq \left(\int_{R^1}|\widehat{u}_0(\xi)|^2(1+|\xi|^2)^{s}d\xi\right)^{\frac{1}{2}}
\left(\int_{R^1}(1+|\xi|^2)^{-s}d\xi\right)^{\frac{1}{2}},
\end{align*}
where
\[
\widehat{u}_0(\xi)=\int_{R^1} u_0(x)e^{-ix\xi}dx.
\]
Then we get $\widehat{u}_0\in L^1(R^1)$ for $u_0\in H^s(R^1)$  with $s>\frac{1}{2}$.

Therefore, for $x\neq 0$, we can obtain
\begin{align*}
\int_{R^1} \widehat{u}_0(\xi)e^{ix\xi}d\xi&=-\int_{R^1} \widehat{u}_0(\xi)e^{ix(\xi+\frac{\pi}{x})}d\xi=-\int_{R^1} \widehat{u}_0(\xi-\frac{\pi}{x})e^{ix\xi}d\xi,
\end{align*}
and
\begin{align*}
\lim\limits_{|x|\to +\infty} |u_0(x)|&=\lim\limits_{|x|\to +\infty} \left|\int_{R^1} \widehat{u}_0(\xi)e^{ix\xi}d\xi\right|\\
&=\lim\limits_{|x|\to +\infty}\frac{1}{2} \left|\int_{R^1} (\widehat{u}_0(\xi)-\widehat{u}_0(\xi-\frac{\pi}{x}))e^{ix\xi}d\xi\right|\\
&\leq\lim\limits_{|x|\to +\infty}\frac{1}{2}\int_{R^1} \left|\widehat{u}_0(\xi)-\widehat{u}_0(\xi-\frac{\pi}{x})\right|d\xi\\
&=0.
\end{align*}
 The result on $v_0$ could be proved in the same way. The proof is complete.
 \end{proof}

{\bf Proof of Theorem \ref{thm-asm}.} First we use the characteristic method to derive the following,
\begin{equation}\label{eq-thm-asm-p1}
u(x,t)-u_0(x-t)- G_1(x-t)=i\int_t^{\infty}  N_1\big( u(x-t+\tau, \tau), v(x-t+\tau,\tau)\big)d\tau,
\end{equation}
where $G_1(x-t)$ is given by Theorem \ref{thm-asm}, and
\[ \big|N_1\big( u(x-t+\tau, \tau), v(x-t+\tau,\tau)\big) \big|\le C_* |u(x-t+\tau,\tau)||v(x-t+\tau,\tau)|^2,
\]
for some constant $C_*>0.$

Then the asymptotic estimate (\ref{eq-thm-asm1}) for $u$ follows by Lemma \ref{lemma-asymL2}, and the estimate (\ref{eq-thm-asm2}) for $v$ could be derived in the same way.

 Now to prove (\ref{eq-thm-asm3}) and (\ref{eq-thm-asm4}), we need to estimate the remainder term in (\ref{eq-thm-asm-p1}) in $L^{\infty}$. We first use Lemma \ref{lemma-initialdata-asymp} to derive that for any $\varepsilon>0$, there is a constant $M<0$, depending on $\varepsilon$ and a constant $C>0$, such that
\[
\sup\limits_{y\le M} |u_0(y)| \le \varepsilon,
\]
which, together with Lemma \ref{lemma-PointwiseEstimate}, leads to the following,
\begin{align*}
& \sup\limits_{x\le M} \Big|\int_{t}^{\infty}N_1\big( u(x-t+\tau, \tau), v(x-t+\tau,\tau)\big)d\tau \Big|  \\
\le{} & C_* \sup\limits_{x\le M}\int^{\infty}_t |u(x-t+\tau,\tau)||v(x-t+\tau,\tau)|^2 d\tau \\
\le{} &C C_*\sup\limits_{x\le M}|u_0(x-t)|\int_{-\infty}^{\infty}|v_0(x)|^2 dx \\
\le{} &CC_*C_0\sup\limits_{x\le M}|u_0(x-t)| \\
\le{} &CC_*C_0\varepsilon.
\end{align*}
On the other hand, we can obtain
\begin{align*}
& \sup\limits_{x\ge M} \Big|\int_{t}^{\infty}N_1\big( u(x-t+\tau, \tau), v(x-t+\tau,\tau)\big)d\tau \Big| \\
\le{} & C_*\sup\limits_{x\ge M}\int^{\infty}_t |u(x-t+\tau,\tau)||v(x-t+\tau,\tau)|^2 d\tau \\
\le{} & C C_*\sup\limits_{x\ge M}\int^{\infty}_t |u_0(x-t)||v_0(x-t+2\tau)|^2 d\tau \\
\le{} & C C_* ||u_0||_{L^{\infty}}\sup\limits_{x\ge M}\int^{\infty}_{x+t} |v_0(y)|^2 dy \\
\le{} & C C_* ||u_0||_{L^{\infty}}\int^{\infty}_{M+t} |v_0(y)|^2 dy.
\end{align*}

Therefore, with the above estimates, we have
\[ \limsup\limits_{t\to+\infty} \sup\limits_{x\in R^1} \Big|\int_{t}^{\infty}N_1\big( u(x-t+\tau, \tau), v(x-t+\tau,\tau)\big)d\tau \Big| \le \varepsilon,\]
for any $\varepsilon>0$,
which yields
\[
\lim\limits_{t\to+\infty} \sup\limits_{x\in R^1} \Big|\int_{t}^{\infty}N_1\big( u(x-t+\tau, \tau), v(x-t+\tau,\tau)\big)d\tau \Big| =0.\]
Then it holds that
\[
\lim\limits_{t\to\infty} \sup_{x\in R^1} \Big|u(x,t)-u_0(x-t)-G_1(x-t) \Big|=0.
\]
The  estimate (\ref{eq-thm-asm4}) on $v$ could be proved in the same way. The proof is complete.\qed

\section{The case of strong solution}
Due to \cite{zhang-zhao},  for the strong solution $(u,v)\in C([0,\infty), L^2(R^1))$, there is a sequence of smooth solutions, $(u^{(k)}, v^{(k)})$ of (\ref{eq-dirac}), satisfying the following,
 \begin{equation}\label{eq-conv1}  \lim_{k\to \infty}\max_{t\in[0,T]} \Big(||u^{(k)}(\cdot, t)-u(\cdot,t)||_{L^2(R^1)}+ ||v^{(k)}(\cdot, t)-v(\cdot,t)||_{L^2(R^1)}\Big)=0,\end{equation}
\begin{equation}\label{eq-conv2} \lim_{k,l\to \infty}||u^{(k)}v^{(k)}-u^{(l)}v^{(l)}||_{L^2(R^1\times [0,T])} =0,
 \end{equation}
and
\begin{equation}\label{eq-conv3} \lim_{k,l\to \infty}||\overline{u^{(k)}}v^{(k)}-\overline{u^{(l)}}v^{(l)}||_{L^2(R^1\times [0,T])} =0.  \end{equation}

\begin{lemma}\label{lemma-remainder}
For $j=1,2$, there hold that
\[
\lim\limits_{t\to +\infty} \sup\limits_{k\ge 0} \int_{-\infty}^{\infty} \big|F_j^k(y,t)\big|^2dy=0,
\]
where
\[ F_1^k(y,t)=\int_t^{\infty} N_1\big(u^{(k)}(y+\tau,\tau), v^{(k)}(y+\tau,\tau)\big)d\tau, \] and
\[ F_2^k(y,t)=\int_t^{\infty} N_2\big(u^{(k)}(y-\tau,\tau), v^{(k)}(y-\tau,\tau)\big)d\tau. \]
\end{lemma}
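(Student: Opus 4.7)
The plan is to reduce Lemma \ref{lemma-remainder} to the estimate in Lemma \ref{lemma-L2bound}, apply it to each classical solution in the approximating sequence, and then promote the resulting pointwise (in $k$) convergence to uniform convergence by exploiting the $L^2$ convergence of initial data in (\ref{eq-conv1}).

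First, since $|N_j(u,v)| \le C_*|u||v|^2$ (with the $C_*$ used in the proof of Theorem \ref{thm-asm}), and each $(u^{(k)}, v^{(k)})$ is a classical solution of (\ref{eq-dirac}), Lemma \ref{lemma-L2bound} applies verbatim with $(u_0^{(k)}, v_0^{(k)}) := (u^{(k)}(\cdot,0), v^{(k)}(\cdot,0))$, yielding
\[
\int_{-\infty}^{\infty} |F_1^k(y,t)|^2\, dy \le C \int_{-\infty}^{\infty} |u_0^{(k)}(y)|^2 \Bigl(\int_{y+2t}^{\infty} |v_0^{(k)}(\tau)|^2\, d\tau\Bigr)^2 dy.
\]
The constant $C$ is uniform in $k$ because (\ref{eq-conv1}) evaluated at $t=0$ forces $\|u_0^{(k)}\|_{L^2}^2 + \|v_0^{(k)}\|_{L^2}^2 \le C_0$ uniformly, and the multiplicative constant from Lemma \ref{lemma-PointwiseEstimate} depends only on $C_0$.

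Second, since $\int_{y+2t}^{\infty} |v_0^{(k)}|^2\,d\tau \le C_0$, I would bound the right-hand side above by $C C_0 \cdot J_k(t)$ via Fubini, where
\[
J_k(t) := \int_{-\infty}^{\infty} |v_0^{(k)}(\tau)|^2 \int_{-\infty}^{\tau - 2t} |u_0^{(k)}(y)|^2\, dy\, d\tau.
\]
It then remains to prove $\sup_{k} J_k(t) \to 0$ as $t\to\infty$. Setting $J(t) := \int |v_0(\tau)|^2 \int_{-\infty}^{\tau-2t} |u_0(y)|^2\, dy\, d\tau$, the two ingredients are: (a) $J(t) \to 0$ as $t \to \infty$, which is dominated convergence exactly as in Lemma \ref{lemma-asymL2}; and (b) $J_k(t) \to J(t)$ as $k \to \infty$, \emph{uniformly in $t \ge 0$}, obtained by splitting $J_k - J$ into the two natural pieces, using $|v_0^{(k)}|^2 - |v_0|^2 = (\overline{v_0^{(k)}} + \overline{v_0})(v_0^{(k)} - v_0) + \mathrm{c.c.}$ (and similarly for $u_0$), Cauchy--Schwarz, and (\ref{eq-conv1}); all resulting bounds are $t$-independent and vanish as $k\to\infty$.

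Combining (a) and (b) via a standard $\varepsilon$-$K$ argument then yields the desired uniform-in-$k$ statement: choose $K$ so that $|J_k(t) - J(t)| < \varepsilon/2$ for all $k \ge K$ and all $t$, then $T_1$ so that $J(t) < \varepsilon/2$ for $t \ge T_1$; for the finitely many indices $k < K$, pointwise dominated convergence produces $T_2 \ge T_1$ with $J_k(t) < \varepsilon$ for $t \ge T_2$. The estimate for $F_2^k$ is entirely symmetric. The only nontrivial step is the uniform-in-$t$ claim (b), but once the bound is cast in the symmetric Fubini form above, it reduces to a clean Cauchy--Schwarz computation depending only on the $L^2$ convergence of initial data.
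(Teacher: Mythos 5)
Your proof is correct and follows essentially the same route as the paper: apply Lemma \ref{lemma-L2bound} to each $(u^{(k)},v^{(k)})$ with its own initial data, split the resulting bound into the term for the limiting data $(u_0,v_0)$ plus an error that is uniform in $t$ and vanishes as $k\to\infty$ by (\ref{eq-conv1}), and conclude with an $\varepsilon$--$K$ argument. Your Fubini linearization of the squared inner integral is a minor (and slightly cleaner) variant of the paper's direct add-and-subtract splitting, but the strategy is the same.
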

\begin{proof}
Let $(u_0^{(k)},v_0^{(k)})=(u^{(k)},v^{(k)})\big|_{t=0}$. By (\ref{eq-conv1}), we have
\begin{equation}\label{eq-k-bdd} \sup\limits_{k\ge 0} \big(||u_0^{(k)}||_{L^2}+||v_0^{(k)}||_{L^2}\big) <\infty.
\end{equation}
Then by Lemma \ref{lemma-L2bound}, we can obtain
\begin{align*}
\int^{\infty}_{-\infty} |F_1^k(y,t)|^2 dy\le {} &CC_* \int^{\infty}_{-\infty} |u_0^{(k)}(y)|^2 \Big(\int^{\infty}_{y+2t} |v_0^{(k)}(\tau)|^2d\tau\Big)^2 dy \\
\le{} & CC_* \int^{\infty}_{-\infty} |u_0(y)|^2 \Big(\int^{\infty}_{y+2t} |v_0(\tau)|^2d\tau\Big)^2 dy \\
&+CC_*\big( ||u_0^{(k)}-u_0||^2_{L^2} ||v_0||^4_{L^2}+ ||u_0^{(k)}||^2_{L^2} ||v_0^{(k)}-v_0||^4_{L^2}\big).
\end{align*}

Together with (\ref{eq-conv1}) and (\ref{eq-k-bdd}), it implies that for any $\varepsilon>0$, there exists a constant $K>0$ such that
 \[
 \sup\limits_{k\ge K} \int_{-\infty}^{\infty} \big|F_1^k(y,t)\big|^2dy\le CC_* \int^{\infty}_{-\infty} |u_0(y)|^2 \Big(\int^{\infty}_{y+2t} |v_0(\tau)|^2d\tau\Big)^2 dy+\frac{1}{2}\varepsilon.
 \]
 Therefore, we have
\begin{align*}
 \limsup\limits_{t\to +\infty} \sup\limits_{k\ge 0} \int_{-\infty}^{\infty} \big|F_1^k(y,t)\big|^2dy
\le{}& CC_*\limsup\limits_{t\to +\infty}\int^{\infty}_{-\infty} |u_0(y)|^2 \Big(\int^{\infty}_{y+2t} |v_0(\tau)|^2d\tau\Big)^2 dy\\
&+ \frac{1}{2}\varepsilon + \lim\limits_{t\to +\infty} \sup\limits_{0\le k<K} \int_{-\infty}^{\infty} \big|F_1^k(y,t)\big|^2dy \\
&\le \varepsilon,
\end{align*}
which yields the convergence result for $F_1^{k}$. The convergence for $F_2^{k}$ could be proved in the same way. The proof is complete.
\end{proof}
Direct computation gives the following.
\begin{lemma}\label{lemma-nonlinearterm}
For any $k$ and $j$, there exist a constant $C_1>0$ such that
\[
|N_1(u^{(k)},v^{(k)})-N_1(u^{(j)},v^{(j)})|\le C_1 \big( |u^{(k)}v^{(k)}-u^{(j)}v^{(j)}||v^{(k)}| +|u^{(j)}v^{(j)}||v^{(k)}-v^{(j)}|\big), \]
and
\[
|N_2(u^{(k)},v^{(k)})-N_2(u^{(j)},v^{(j)})|\le C_1 \big( |u^{(k)}v^{(k)}-u^{(j)}v^{(j)}||u^{(k)}| +|u^{(j)}v^{(j)}||u^{(k)}-u^{(j)}|\big). \]
\end{lemma}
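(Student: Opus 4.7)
\medskip

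\textbf{Proof proposal for Lemma \ref{lemma-nonlinearterm}.} The plan is to first compute $N_1$ and $N_2$ explicitly from the formula $W(u,v)=\alpha|u|^2|v|^2+\beta(\overline{u}v+u\overline{v})^2$, then rewrite each monomial so that the product $uv$ (or its conjugate) appears as a factor, and finally apply the elementary identity $|ab-cd|\le |a-c||b|+|c||b-d|$ termwise. The whole argument is algebraic; no analysis beyond the Wirtinger derivatives is required, which is why the paper introduces it with ``direct computation gives the following.''

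First I would carry out the $\partial_{\bar u}$ and $\partial_{\bar v}$ derivatives:
\[
N_1=\alpha u|v|^2+2\beta(\overline{u}v+u\overline{v})v=(\alpha+2\beta)u|v|^2+2\beta\overline{u}v^2,
\qquad
N_2=(\alpha+2\beta)|u|^2v+2\beta u^2\overline{v}.
\]
The key observation is the factorization
\[
u|v|^2=(uv)\overline{v},\qquad \overline{u}v^2=\overline{(uv)}\,v,\qquad |u|^2v=\overline{u}\,(uv),\qquad u^2\overline{v}=u\,\overline{(uv)},
\]
so that every monomial in $N_1$ is a product of $uv$ (or its conjugate) with $v$ (or $\bar v$), and every monomial in $N_2$ is a product of $uv$ (or its conjugate) with $u$ (or $\bar u$).

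Next, write
\[
N_1(u^{(k)},v^{(k)})-N_1(u^{(j)},v^{(j)})=(\alpha+2\beta)\bigl(u^{(k)}v^{(k)}\overline{v^{(k)}}-u^{(j)}v^{(j)}\overline{v^{(j)}}\bigr)+2\beta\bigl(\overline{u^{(k)}v^{(k)}}\,v^{(k)}-\overline{u^{(j)}v^{(j)}}\,v^{(j)}\bigr),
\]
and apply $|ab-cd|\le |a-c||b|+|c||b-d|$ with $a=u^{(k)}v^{(k)}$, $c=u^{(j)}v^{(j)}$, $b=\overline{v^{(k)}}$, $d=\overline{v^{(j)}}$ to each of the two differences. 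Both yield
\[
\le\bigl|u^{(k)}v^{(k)}-u^{(j)}v^{(j)}\bigr|\,|v^{(k)}|+\bigl|u^{(j)}v^{(j)}\bigr|\,|v^{(k)}-v^{(j)}|,
\]
so setting $C_1=|\alpha+2\beta|+2|\beta|$ gives the first inequality. The bound for $N_2$ follows identically, with $v$ replaced by $u$ everywhere in the last two factors.

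There is no genuine obstacle here: the only mild subtlety is recognizing that the natural factorization to use is the one that makes $uv$ appear as a single block (rather than splitting the cubic into separate factors of $u$ and of $v$). This is exactly what is needed for the lemma to interface with the Cauchy sequence property (\ref{eq-conv2})--(\ref{eq-conv3}), where it is the products $u^{(k)}v^{(k)}$ and $\overline{u^{(k)}}v^{(k)}$, not the individual factors, that are known to converge in $L^2(R^1\times[0,T])$.
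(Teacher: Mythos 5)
\textbf{Referee comment on the proposed proof of Lemma \ref{lemma-nonlinearterm}.}
Your computation of $N_1=(\alpha+2\beta)u|v|^2+2\beta\overline{u}v^2$ and $N_2=(\alpha+2\beta)|u|^2v+2\beta u^2\overline{v}$ is correct, and the treatment of the $(\alpha+2\beta)$--terms via $u|v|^2=(uv)\overline{v}$, $|u|^2v=\overline{u}(uv)$ and the identity $|ab-cd|\le|a-c||b|+|c||b-d|$ is fine. The gap is in the $\beta$--terms: the factorizations $\overline{u}v^2=\overline{(uv)}\,v$ and $u^2\overline{v}=u\,\overline{(uv)}$ are false. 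Indeed $\overline{(uv)}\,v=\overline{u}\,\overline{v}\,v=\overline{u}|v|^2\neq\overline{u}v^2$ and $u\,\overline{(uv)}=|u|^2\overline{v}\neq u^2\overline{v}$, so the displayed identity for $N_1(u^{(k)},v^{(k)})-N_1(u^{(j)},v^{(j)})$ does not hold, and the bound you derive from it is unsupported. This is not a cosmetic slip: the correct single-block factorizations are $\overline{u}v^2=(\overline{u}v)\,v$ and $u^2\overline{v}=\overline{(\overline{u}v)}\,u$, i.e.\ the natural block for the $\beta$--terms is $\overline{u}v$, not $uv$, and the same two-term splitting then produces $\bigl|\overline{u^{(k)}}v^{(k)}-\overline{u^{(j)}}v^{(j)}\bigr||v^{(k)}|+|u^{(j)}v^{(j)}||v^{(k)}-v^{(j)}|$. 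Since $|\overline{u^{(k)}}v^{(k)}-\overline{u^{(j)}}v^{(j)}|$ is \emph{not} equal to $|u^{(k)}v^{(k)}-u^{(j)}v^{(j)}|$ for complex arguments, your argument does not yield the inequality as you state it.

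Two remarks on how to repair this. First, the repaired bound with the extra term $\bigl|\overline{u^{(k)}}v^{(k)}-\overline{u^{(j)}}v^{(j)}\bigr||v^{(k)}|$ is exactly what the paper actually uses downstream: the proof of Lemma \ref{lemma-asymL2-3} invokes \emph{both} (\ref{eq-conv2}) and (\ref{eq-conv3}), i.e.\ the Cauchy property of both $u^{(k)}v^{(k)}$ and $\overline{u^{(k)}}v^{(k)}$ in $L^2(R^1\times[0,T])$, so the corrected lemma interfaces with the rest of the argument without any loss. Second, if you insist on the literal statement with only $|u^{(k)}v^{(k)}-u^{(j)}v^{(j)}|$, it can still be proved, but not by your route: write $\overline{u}v^2=\overline{(uv)}\,f(v)$ with $f(v)=v^2/\overline{v}$, note $|f(v)|=|v|$ and that $f$ is globally Lipschitz (with constant $3$, being homogeneous of degree one and smooth on $|v|=1$), and apply the product splitting to $\overline{(uv)}$ and $f(v)$; this gives the stated form with $C_1=|\alpha+2\beta|+6|\beta|$. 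Either fix requires an idea that is absent from your write-up, so as it stands the proof is incorrect.
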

Define
\[
G^k_1(y,\infty)=-i\int_0^{\infty} N_1\big(u^{(k)}(y+\tau,\tau), v^{(k)}(y+\tau,\tau)\big)d\tau,
 \]
 and
\[ G^k_2(y,\infty)=-i\int_0^{\infty} N_2\big(u^{(k)}(y-\tau,\tau), v^{(k)}(y-\tau,\tau)\big)d\tau,\]
then we have the following lemma.
\begin{lemma}\label{lemma-asymL2-3}
 There exists a pair of functions  $(g_1,g_2)\in L^2(R^1)$ such that for any $a,b\in R^1$ with $a<b$, there hold that
 \[ \lim\limits_{k\to +\infty} \int_{a}^{b} \Big|G^k_1(y,\infty) -g_1(y) \Big| dy =0,
\]
and
\[ \lim\limits_{k\to +\infty} \int_{a}^{b} \Big|G^k_2(y,\infty) -g_2(y) \Big| dy =0.
\]
Moreover, the sequences of functions $\{G^{k}_1(y,\infty)\}$ and $\{G^{k}_2(y,\infty)\}$ converge weakly to $g_1(y)$ and $g_2(y)$, respectively, in $L^2(R^1)$.
\end{lemma}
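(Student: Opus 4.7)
The plan is to show that $\{G_1^k(\cdot,\infty)\}$ is a Cauchy sequence in $L^1_{\mathrm{loc}}(R^1)$ by splitting the defining integral at a large time $T$: the tail on $[T,\infty)$ is uniformly small in $k$ by Lemma \ref{lemma-remainder}, while the finite-time piece on $[0,T]$ becomes Cauchy in $k,j$ thanks to (\ref{eq-conv1})--(\ref{eq-conv2}) together with Lemma \ref{lemma-nonlinearterm}. An a priori bound $\sup_k ||G_1^k(\cdot,\infty)||_{L^2(R^1)}<\infty$, obtained from $|N_1|\le C_*|u||v|^2$ and Lemma \ref{lemma-L2bound} (with $t=0$) together with (\ref{eq-k-bdd}), will then upgrade the $L^1_{\mathrm{loc}}$ limit $g_1$ to an element of $L^2(R^1)$ and give weak $L^2$ convergence of the full sequence.

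The central step is the Cauchy estimate. Fixing $a<b$ and $T>0$, I would write
\begin{align*}
\int_a^b|G_1^k(y,\infty)-G_1^j(y,\infty)|\,dy
\le{}& \int_a^b\int_0^T |N_1(u^{(k)},v^{(k)})-N_1(u^{(j)},v^{(j)})|(y+\tau,\tau)\,d\tau\,dy\\
&+\int_a^b(|F_1^k(y,T)|+|F_1^j(y,T)|)\,dy.
\end{align*}
For the first term I would apply Lemma \ref{lemma-nonlinearterm}, exchange the order of integration, and translate by $x=y+\tau$ to bound it by
\[ C_1\Bigl(||u^{(k)}v^{(k)}-u^{(j)}v^{(j)}||_{L^2(R^1\times[0,T])}\,||v^{(k)}||_{L^2(R^1\times[0,T])}+||u^{(j)}v^{(j)}||_{L^2(R^1\times[0,T])}\,||v^{(k)}-v^{(j)}||_{L^2(R^1\times[0,T])}\Bigr) \]
by Cauchy--Schwarz on $R^1\times[0,T]$. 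The conservation law (\ref{eq-dirac-conserv}) supplies a uniform bound for $||v^{(k)}||_{L^2(R^1\times[0,T])}$; (\ref{eq-conv2}) makes $\{u^{(k)}v^{(k)}\}$ Cauchy (hence bounded) in $L^2(R^1\times[0,T])$; and (\ref{eq-conv1}), (\ref{eq-conv2}) drive the mismatched factors to $0$ as $k,j\to\infty$. For the second term, Cauchy--Schwarz on $(a,b)$ gives $\int_a^b|F_1^k|\,dy\le\sqrt{b-a}\,||F_1^k(\cdot,T)||_{L^2}$, which by Lemma \ref{lemma-remainder} is $\le\varepsilon$ uniformly in $k$ once $T$ is large. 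Given $\varepsilon>0$, I would choose $T$ first so that the tail contribution is $<\varepsilon$, then take $k,j$ large enough so the finite-time contribution is also $<\varepsilon$.

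Completeness of $L^1(a,b)$ then produces $g_1\in L^1_{\mathrm{loc}}(R^1)$ with $G_1^k(\cdot,\infty)\to g_1$ in $L^1(a,b)$ for every $a<b$. The uniform $L^2$ bound combined with weak compactness of bounded sets in $L^2$ identifies $g_1$ as the weak $L^2$ limit of any weakly convergent subsequence (pairing against bounded compactly supported test functions forces agreement with $g_1$), which places $g_1\in L^2(R^1)$ and, via the standard subsequence--of--subsequence argument, upgrades the convergence to weak $L^2$ convergence of the entire sequence. The corresponding statements for $G_2^k$ and $g_2$ follow by the parallel argument, replacing the characteristic $y+\tau$ by $y-\tau$ and using the second inequality of Lemma \ref{lemma-nonlinearterm}. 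The main obstacle will be the interleaving of limits: fixing $T$ first so that the tail is absorbed uniformly in $k$, and only then running the Cauchy argument in $k,j$ on the finite-time piece, before finally passing from $L^1_{\mathrm{loc}}$ convergence to weak $L^2$ convergence of the whole sequence rather than merely a subsequence.
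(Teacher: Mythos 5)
Your proposal is correct and follows essentially the same route as the paper's proof: split $G_1^k(\cdot,\infty)$ at a finite time, make the finite-time piece Cauchy in $L^1$ via Lemma \ref{lemma-nonlinearterm} together with (\ref{eq-conv1})--(\ref{eq-conv3}), absorb the tail uniformly in $k$ by Lemma \ref{lemma-remainder}, and then use the uniform $L^2$ bound to identify the $L^1_{\mathrm{loc}}$ limit with the weak $L^2$ limit. Your explicit treatment of the order of limits and of upgrading subsequential weak convergence to convergence of the full sequence is, if anything, slightly more careful than the paper's write-up.
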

\begin{proof}
Let
\[
G^k_1(y,t)=-i\int_0^{t} N_1\big(u^{(k)}(y+\tau,\tau), v^{(k)}(y+\tau,\tau)\big)d\tau,
 \]
then by Lemma \ref{lemma-nonlinearterm}, for any $t<\infty$, we can obtain
\begin{align*}
& ||G^k_1(\cdot,t)-G^j_1(\cdot,t)||_{L^1(R^1)} \\
\le{}& \int_0^t||N_1(u^{(k)},v^{(k)})-N_1(u^{(j)},v^{(j)})||_{L^1(R^1)}(\cdot, \tau) d\tau \\
\le{}& C_1 ||u^{(k)}v^{(k)}-u^{(j)}v^{(j)}||_{L^2(R^1\times [0,t])} ||v^{(k)}||_{L^2(R^1\times [0,t])} \\
& +||u^{(j)}v^{(j)}||_{L^2(R^1\times [0,t])}||v^{(k)}-v^{(j)}||_{L^2(R^1\times [0,t])}.
\end{align*}
Then, from (\ref{eq-conv1})-(\ref{eq-conv3}), it follows that
\begin{equation}
\lim\limits_{k,j\to\infty}||G^k_1(\cdot,t)-G^j_1(\cdot,t)||_{L^1(R^1)}=0.
\end{equation}

Therefore, for any $a,b\in R^1$ with $a<b$, and for any $t<\infty$,  we have
\begin{align*}
& \limsup\limits_{k,j\to\infty}||G^k_1(\cdot,\infty)-G^j_1(\cdot,\infty)||_{L^1((a,b))} \\
\le{}&\limsup\limits_{k,j\to\infty}||G^k_1(\cdot,t)-G^j_1(\cdot,t)||_{L^1((a,b))}
 +\limsup\limits_{k\to\infty} ||F^k_1(\cdot,t)||_{L^1((a,b))}+\limsup\limits_{j\to\infty}||F^j_1(\cdot,t)||_{L^1((a,b))} \\  \le{}& \limsup\limits_{k\to\infty} ||F^k_1(\cdot,t)||_{L^1((a,b))}+\limsup\limits_{j\to\infty}||F^j_1(\cdot,t)||_{L^1((a,b))}.
\end{align*}
Taking $t \to \infty$ in last inequality, it follows from Lemma \ref{lemma-remainder} that
\[ \lim\limits_{k,j\to\infty}||G^k_1(\cdot,\infty)-G^j_1(\cdot,\infty)||_{L^1((a,b))} =0.\]

On the other hand, by Lemma \ref{lemma-remainder}, we have
\[
 \sup\limits_{k\ge 0} \int_{-\infty}^{\infty} \big|G_1^k(y,\infty)\big|^2dy<+\infty.
\]
Then  we can find a function $g_1\in L^2(R^1)$ such that the sequence of functions $\{G^{k}_1(y,\infty)\}$ converges weakly to $g_1(y)$ in $L^2(R^1)$, and
\[ \lim\limits_{k\to\infty}||G^{k}_1(\cdot,\infty)-g_1||_{L^1((a,b))} =0.\]
The result on $G_2^k$ could be proved in the same way. The proof is complete.
\end{proof}

%Before proving Theorem \ref{thm-asmL2}, let us state the following lemma.
%\begin{lemma}\label{lemma-BS}
%There exists a constant $\mu>0$, such that if a sequence of functions $\{w_k\}_{k=0}^{\infty}\in L^2(R^1)$ satisfy
%\[
%\int_{R^1}|w_k|^2dx\leq\mu,
%\]
%for any $k\geq 0$, and  when $k\rightarrow+\infty$,
%\[
%w_k\rightharpoonup w,
%\]
%in $L^2(R^1)$, then it holds that
%\[
%\int_{R^1}|w|^2dx\leq\mu.
%\]
%\end{lemma}
%\begin{proof}
%By Banach-Saks Theorem, we can obtain that there exists a subsequence of functions  $\{w_{k_j}\}\subset\{w_{k}\}_{k=0}^{\infty}$ such that
%\[
%\frac{\sum_{j=1}^{N}w_{k_j}}{N}\rightarrow w \quad \text{in} \quad L^2(R^1),
%\]
%as $N\rightarrow+\infty$, and
%\[
%\int_{R^1}\left|\frac{\sum_{j=1}^{N}w_{k_j}}{N}\right|^2dx\leq
%\int_{R^1}\frac{\sum_{j=1}^{N}\left|w_{k_j}\right|^2}{N}dx\leq\mu.
%\]
%Then let $N\rightarrow\infty$, we have $\int_{R^1}|w|^2dx\leq\mu.$ This completes the proof.
%\end{proof}
{\bf Proof of Theorem \ref{thm-asmL2}.}
We use the notations in the proof of Lemma \ref{lemma-remainder} and Lemma \ref{lemma-asymL2-3}.

 For smooth solutions $(u^{(k)},v^{(k)})$, we have
 \[
 u^{(k)}(x,t)-u^{(k)}_0(x-t)- G_1^{(k)}(x-t,\infty)=i\int_t^{\infty}  N_1\big( u^{(k)}(x-t+\tau, \tau), v^{(k)}(x-t+\tau,\tau)\big)d\tau
\]
which yields
\[ \int_{-\infty}^{\infty} |u^{(k)}(x,t)-u_0^{(k)}(x-t)-G^k_1(x-t,\infty)|^2 dx \le \sup\limits_{k\ge 0} \int_{-\infty}^{\infty} \big|F_1^k(y,t)\big|^2dy. \]
Then when $k\rightarrow+\infty$, by (\ref{eq-conv1}) and Lemma \ref{lemma-asymL2-3}, we get the following weak convergence in $L^2$,
\[
u^{(k)}(x,t)-u_0^{(k)}(x-t)-G^k_1(x-t,\infty)\rightharpoonup u(x,t)-u_0(x-t)-g_1(x-t),
\]
 in $L^2(R^1)$.

 Therefore,  by the lower semicontinuity of $L^2-$ norm with respect to weak topology in $L^2$, see for example \cite{lax}, we can obtain
\begin{align}\label{eq-asym-approxestimate1}
&\int_{-\infty}^{\infty} |u(x,t)-u_0(x-t)-g_1(x-t)|^2 dx\nonumber\\
\le & \varliminf\limits_{k\to\infty}\int_{-\infty}^{\infty} |u^{(k)}(x,t)-u_0^{(k)}(x-t)-G^k_1(x-t,\infty)|^2 dx\nonumber\\
\le& \sup\limits_{k\ge 0} \int_{-\infty}^{\infty} \big|F_1^k(y,t)\big|^2dy.
\end{align}
Let $t\rightarrow+\infty$,  by Lemma \ref{lemma-remainder}, we can prove (\ref{eq-asym1}). (\ref{eq-asym2}) could be proved in the same way.  The proof is complete.\qed

\section*{ Acknowledgement}
The authors would like to thank the referees for valuable comments and suggestions.
This work was partially supported by NSFC Project 11421061, by the 111 Project
B08018 and by Natural Science Foundation of Shanghai 15ZR1403900.

\end{document}